\newtheorem{theorem}{Theorem}[section]
\newtheorem{lemma}[theorem]{Lemma}
\newtheorem{corollary}[theorem]{Corollary}
\theoremstyle{definition}
\theoremstyle{remark}
\numberwithin{equation}{section}
\begin{document}

\title{On the flint hills series}

\author{T. Agama}
\address{Department of Mathematics, African Institute for mathematical sciences, Ghana.}
\email{Theophilus@aims.edu.gh/emperordagama@yahoo.com}

\subjclass[2010]{Primary 40A05; Secondary 11J82}

\date{\today}


\keywords{series; flint hill; convergence; divergence}

\begin{abstract}
In this note, we study the flint hills series of the form
\begin{align}
\sum \limits_{n=1}^{\infty}\frac{1}{(\sin^2n) n^3}\nonumber
\end{align}
via a certain method. The method essentially works by erecting certain pillars sufficiently close to the terms in the series and evaluating the series at those spots. This allows us to relate the convergence and the divergence of the series to other series that are somewhat tractable. In particular, we show that the convergence of the flint hill series relies very heavily on the condition that for any small $\epsilon>0$
\begin{align}
\bigg|\sum \limits_{i=0}^{\frac{n+1}{2}}\sum \limits_{j=0}^{i}(-1)^{i-j}\binom{n}{2i+1} \binom{i}{j}\bigg|^{2s} \leq |(\sin^2n)|n^{2s+2-\epsilon}\nonumber
\end{align}
for some $s\in \mathbb{N}$.
\end{abstract}

\maketitle

\section{Introduction}

The Flint Hills series
\begin{equation}
\sum_{n=1}^{\infty}\frac{1}{n^3\sin^2(n)}\nonumber
\end{equation}
has become a familiar open problem in elementary analytic number theory and recreational mathematics. Its appeal comes from the tension between a very simple-looking summand and a highly nontrivial global convergence question, a point emphasized early in the popular literature by Pickover \cite{pickover2002mathematics}. The problem is also notable because Alekseyev showed that convergence of the series would force a strong Diophantine consequence for $\pi$, namely an upper bound on the irrationality measure of $\pi$ \cite{alekseyev2011convergence}. This link explains why the question is so difficult: any direct attack on the series is entangled with deep information about the rational approximation.\\

The paper proposes a different route. Rather than approaching the series through irrationality measures, it develops a local trigonometric decomposition around the integer arguments of the sine function and then iterates that decomposition to compare the original sum with a family of modified series. The key auxiliary quantity is a combinatorial expression
\begin{equation}
G(n)=\sum_{i=0}^{\frac{n+1}{2}}\sum_{j=0}^{i}(-1)^{i-j}\binom{n}{2i+1}\binom{i}{j},\nonumber
\end{equation}
which arises from a multiple-angle expansion for the sine function. In the context of the paper, this quantity plays the role of a transfer factor: it allows the Flint Hills series to be rewritten, at the level of asymptotic comparison, in terms of weighted sums involving higher powers of $G(n)$.\\

The argument begins with two elementary trigonometric inputs: the standard local approximation $\sin x\sim x$ near the origin and an exact finite identity for $\sin(n\theta)$ in terms of the powers of $\sin\theta$ and $\cos\theta$. These are combined to build an iteration lemma showing that the original partial sums are asymptotically comparable to a sequence of increasingly modified partial sums. This iterative step is the technical core of the paper because it establishes a chain of asymptotic relations that can be repeated as many times as desired.\\

Using this iteration method, the paper derives its first main structural result: the Flint Hills series is equivalent, with respect to convergence or divergence, to a family of generalized series of the form
\begin{equation}
\sum_{n=1}^{\infty}\frac{(G(n))^{2s}}{n^{2s+3}\sin^2(n)},
\quad s\in\mathbb{N}.\nonumber
\end{equation}
This equivalence reformulates the original convergence problem in terms of the growth of the auxiliary factor $G(n)$ relative to the oscillatory denominator $\sin^2(n)$. The paper then presents a sufficient condition for convergence: if $G(n)$ is bounded above in a way that compensates for the singular behavior of $\sin^2(n)$, then the transformed series is dominated by a convergent $p$-series and hence the Flint Hills series converges.

\subsection{Organization of the paper} The structure of the paper is therefore straightforward. Section~2 develops the key lemmas, establishes the iteration mechanism, and proves the equivalence theorem connecting the original series to the generalized family. Section~3 formulates a convergence criterion in terms of the auxiliary combinatorial quantity $G(n)$ and concludes with the main conditional convergence statement. In this way, the paper replaces a global Diophantine question with a local-to-global analytic scheme built from trigonometric identities and asymptotic comparison.

\section{Main result}

In this section, we use a different method to study the convergence~(resp. divergence) of the flint hill series. The method works basically by erecting certain pillars, which are literally vertical lines in sufficiently small neighborhoods of the arguments of the terms in the series, and subsequently applying a certain decomposition. This allows us to obtain equivalent forms of the flint hill series at the compromise of sufficiently higher powered polynomials and certain local powered functions. Iterating the process at any given number of times, we can then obtain a general equivalent form of the flint hills series. The convergence or divergence of the flint hills series could be studied if we can say something substantial about its equivalent forms. 

\begin{lemma}\label{key lemma}
The limit holds
\begin{align}
\lim \limits_{n\longrightarrow a}\frac{\sin (n-a)}{n-a}=1\nonumber
\end{align}
equivalently 
\begin{align}
\lim \limits_{m\longrightarrow 0}\frac{\sin m}{m}=1.\nonumber
\end{align}
\end{lemma}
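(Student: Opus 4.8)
The plan is to reduce the first limit to the second by a change of variable, and then to establish the standard limit $\lim_{m \to 0}(\sin m)/m = 1$ by means of the squeeze theorem applied to a geometric inequality on the unit circle. First I would set $m = n - a$ and observe that as $n \longrightarrow a$ one has $m \longrightarrow 0$, while $\sin(n-a)/(n-a) = (\sin m)/m$ identically; hence the two displayed assertions are literally the same statement, and it suffices to treat the second form.

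Next, for $0 < m < \pi/2$, I would invoke the classical area comparison inside the unit circle: the triangle spanned by the origin and two boundary points subtending the central angle $m$ is contained in the circular sector of angle $m$, which in turn sits inside the right triangle cut off by the tangent line. Comparing the three areas yields the chain
\[
\sin m < m < \tan m.
\]
Dividing through by the positive quantity $\sin m$ and passing to reciprocals, this rearranges to
\[
\cos m < \frac{\sin m}{m} < 1.
\]
Letting $m \longrightarrow 0^{+}$ and using the continuity of cosine, the left-hand bound tends to $1$, so the squeeze theorem forces $(\sin m)/m \longrightarrow 1$ as $m \longrightarrow 0^{+}$. To obtain the full two-sided limit I would remark that $(\sin m)/m$ is an even function, since $\sin$ is odd and therefore $\sin(-m)/(-m) = (\sin m)/m$; consequently the left-hand limit coincides with the right-hand limit, and the common value is $1$.

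The main obstacle here is conceptual rather than computational: justifying the inequality $\sin m < m < \tan m$ without circular reasoning, because the middle term $m$ is precisely the arc length (equivalently twice the sector area), and a fully rigorous argument presupposes that arc length and sector area have been defined independently of the very derivative one is implicitly computing. I would sidestep this difficulty in one of two standard ways: either by adopting the area/arc-length comparison as the defining property of radian measure, so that the inequality is built into the definition, or by starting from the power-series representation $\sin m = m - m^{3}/6 + \cdots$, from which the quotient $(\sin m)/m = 1 - m^{2}/6 + \cdots$ visibly tends to $1$ and the lemma follows at once.
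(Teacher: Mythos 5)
Your proof is correct: the change of variable $m=n-a$ reduces the first limit to the second, and the squeeze argument via $\sin m < m < \tan m$ (with the evenness of $(\sin m)/m$ handling the left-hand limit) is the canonical textbook derivation; your closing remark about avoiding circularity in defining arc length is a thoughtful extra. The paper itself offers no proof at all --- it simply records this lemma as a ``basic and standard key input'' from elementary calculus --- so your argument supplies exactly the standard justification the paper takes for granted.
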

\bigskip

\subsection{Notation}

Throughout this paper, the limit 
$$
\lim \limits_{n\longrightarrow a}\frac{\sin (n-a)}{n-a}=1
$$
will be briefly expressed as 
$$
\frac{\sin (n-a)}{n-a}\sim 1
$$ 
in any small neighbourhood of $a$; equivalently, 
$$
\sin (n-a)\sim n-a
$$ 
in any small neighbourhood of $a$.

\begin{lemma}\label{key lemma 2}
The following identity
\begin{align}
\sin \delta&=\sin \left(\frac{\delta}{n}\right)\sum \limits_{i=0}^{\frac{n+1}{2}}\sum \limits_{j=0}^{i}(-1)^{i-j}\binom{n}{2i+1} \binom{i}{j}\cos^{n-2(i-j)-1}\left(\frac{\delta}{n}\right)\nonumber
\end{align}
holds for any $\delta>0$ and $n\in \mathbb{N}$ with $n>1$.
\end{lemma}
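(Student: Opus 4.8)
The plan is to reduce the identity to the classical multiple-angle expansion for $\sin(n\theta)$ via De Moivre's theorem, followed by a binomial reindexing. First I would set $\theta=\delta/n$, so that $\delta=n\theta$ and the claim becomes an identity expressing $\sin(n\theta)$ in powers of $\sin\theta$ and $\cos\theta$. Applying the binomial theorem to $(\cos\theta+\sqrt{-1}\,\sin\theta)^n$ and extracting the imaginary part, I obtain
\begin{align}
\sin(n\theta)=\sum_{i=0}^{\lfloor (n-1)/2\rfloor}(-1)^i\binom{n}{2i+1}\cos^{n-2i-1}\theta\,\sin^{2i+1}\theta,\nonumber
\end{align}
where the sign $(-1)^i$ records the power of the imaginary unit carried by the odd-indexed terms $k=2i+1$, which alone contribute to the imaginary part.

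Next I would factor a single $\sin\theta$ out of $\sin^{2i+1}\theta$ and convert the remaining even power through $\sin^{2i}\theta=(1-\cos^2\theta)^i$. Expanding the latter by the binomial theorem as $\sum_{j=0}^{i}\binom{i}{j}(-1)^j\cos^{2j}\theta$ and collecting the resulting power of $\cos\theta$ produces the exponent $(n-2i-1)+2j=n-2(i-j)-1$, which is precisely the exponent appearing in the statement. The accompanying sign becomes $(-1)^i(-1)^j=(-1)^{i+j}$; since $(-1)^{i+j}$ and $(-1)^{i-j}$ differ only by the even quantity $2j$, the two coincide, matching the factor $(-1)^{i-j}$ in the claim. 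Substituting $\theta=\delta/n$ back in then yields the double sum in the stated form.

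Finally I would justify the upper index $\tfrac{n+1}{2}$ used in the outer summation. The De Moivre expansion terminates naturally at $i=\lfloor(n-1)/2\rfloor$, but extending the sum up to $\tfrac{n+1}{2}$ introduces only terms with $2i+1>n$, for which $\binom{n}{2i+1}=0$; these vanish identically, so the stated range is harmless and covers both parities of $n$ uniformly. I expect no genuine obstacle here: the argument is the standard De Moivre expansion followed by a binomial substitution, and the only real care required is the bookkeeping — tracking the sign convention, verifying that the collected exponent is exactly $n-2(i-j)-1$, and confirming that the boundary terms of the extended range contribute nothing.
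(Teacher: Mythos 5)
Your proposal is correct and follows essentially the same route as the paper: both rest on substituting $\theta=\delta/n$ into the Vi\`ete multiple-angle expansion $\sin(n\theta)=\sin\theta\sum_{i}\sum_{j}(-1)^{i-j}\binom{n}{2i+1}\binom{i}{j}\cos^{n-2(i-j)-1}\theta$, the only difference being that the paper cites this expansion as known while you derive it from De Moivre's theorem together with the binomial expansion of $(1-\cos^2\theta)^i$. Your extra care with the sign $(-1)^{i+j}=(-1)^{i-j}$, the collected exponent, and the vanishing of the terms with $2i+1>n$ in the extended summation range is sound and, if anything, makes the argument more self-contained than the paper's.
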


\begin{proof}
This identity is easily obtained by writing 
\begin{align}
\sin \delta&=\sin \left(\frac{\delta \cdot n}{n}\right)\nonumber
\end{align}
and applying the trigonometric identity 
\begin{align}
\sin (n\theta)=\sin \theta \sum \limits_{i=0}^{\frac{n+1}{2}}\sum \limits_{j=0}^{i}(-1)^{i-j}\binom{n}{2i+1}\binom{i}{j}\cos^{n-2(i-j)-1}\theta\nonumber
\end{align}
which can be accessed on the Wikipedia page and due to  Francois Viete.
\end{proof}

\begin{lemma}\label{The iteration method}
The following asymptotic holds
\begin{align}
\sum \limits_{n=1}^{k}\frac{1}{(\sin^2n) n^3} \sim \sum \limits_{n=1}^{k}\frac{(G(n))^2}{(\sin^2n) n^5}\cdots \sim \sum \limits_{n=1}^{k}\frac{(G(n))^{2s}}{(\sin^2n) n^{2s+3}}\nonumber
\end{align} 
where 
\begin{align}
G(n)=\sum \limits_{i=0}^{\frac{n+1}{2}}\sum \limits_{j=0}^{i}(-1)^{i-j}\binom{n}{2i+1} \binom{i}{j}\nonumber
\end{align}
for all $s\geq 1$ with $s\in \mathbb{N}$.
\end{lemma}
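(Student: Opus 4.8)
The plan is to collapse the whole chain of equivalences onto a single structural fact, namely that $G(n)$ is asymptotically $n$, and then to feed that fact repeatedly into the denominators. First I would establish $G(n)\sim n$ by erecting a pillar at the origin, i.e.\ letting $\delta\longrightarrow 0$ and applying Lemma~\ref{key lemma 2}. As $\delta/n\longrightarrow 0$ every factor $\cos^{n-2(i-j)-1}(\delta/n)\longrightarrow 1$, so the double sum on the right collapses to $G(n)$; simultaneously Lemma~\ref{key lemma} gives $\sin\delta\sim\delta$ and $\sin(\delta/n)\sim\delta/n$. Dividing the identity of Lemma~\ref{key lemma 2} through by $\sin(\delta/n)$ and comparing the two sides then yields $\delta\sim(\delta/n)\,G(n)$, whence $G(n)\sim n$ and in particular $G(n)^2\sim n^2$.

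With this in hand the iteration is purely formal. For each fixed $n$ I would write $\frac{1}{(\sin^2 n)\,n^3}=\frac{n^2}{(\sin^2 n)\,n^5}$ and promote the factor $n^2$ in the numerator to $G(n)^2$ via $G(n)^2\sim n^2$, producing $\frac{(G(n))^2}{(\sin^2 n)\,n^5}$; summing over $1\le n\le k$ gives the first equivalence. The general step is identical: starting from the $s$-th term $\frac{(G(n))^{2s}}{(\sin^2 n)\,n^{2s+3}}$, multiply and divide by $n^2$ to reach $\frac{(G(n))^{2s}\,n^2}{(\sin^2 n)\,n^{2s+5}}$ and promote the new $n^2$ to $G(n)^2$, yielding the $(s+1)$-st term. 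An induction on $s$ then produces the full displayed chain for every $s\ge 1$.

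The hard part is not the algebra but the logic of passing from a pointwise, neighbourhood-based equivalence to an equivalence of the finite sums, and making this survive both the iteration in $s$ and the eventual limit $k\longrightarrow\infty$: term-by-term asymptotic equivalence does not in general transfer to partial sums, and here the replacement factor $G(n)^2/n^2$ is applied $s$ times, so any genuine error would compound. The cleanest way to neutralise this obstacle is to upgrade the first step from an asymptotic to an exact identity. The double sum in Lemma~\ref{key lemma 2} is precisely the Chebyshev polynomial of the second kind $U_{n-1}(\cos(\delta/n))$, so $G(n)$ is its value at argument $1$, namely $U_{n-1}(1)=n$. Thus $G(n)=n$ exactly, every $\sim$ in the chain becomes an equality, and the partial sums in the statement are in fact identical term by term. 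I would therefore present the pillar argument as the heuristic motivation and record $G(n)=n$ as the substantive content that renders all of the convergence bookkeeping vacuous.
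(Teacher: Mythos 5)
Your proposal is correct, and it takes a genuinely cleaner route than the paper. The paper erects a pillar $a=n+\delta$ at each integer $n$, applies the decomposition $\sin(n-a)=(\sin n)\bigl(\cos a-\frac{(\sin a)\cos n}{\sin n}\bigr)$ to trade $\sin n$ for $\frac{n-a}{\cos a-\cdots}$, and only then invokes Lemma~\ref{key lemma 2} on the resulting $\sin\delta$ in the numerator; the factor $G(n)^2/n^2$ emerges at the end of that computation, and the chain of $\sim$'s between \emph{partial sums} is then asserted without addressing the point you raise, namely that term-by-term asymptotics need not transfer to sums (the paper never defines what $\sim$ means for a finite sum carrying an auxiliary parameter $\delta$). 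You bypass the angle-subtraction decomposition entirely, extract $G(n)\sim n$ directly from Lemma~\ref{key lemma 2} by letting $\delta\to 0$, and --- crucially --- upgrade this to the exact identity $G(n)=U_{n-1}(1)=n$, the double sum being the expansion of the Chebyshev polynomial of the second kind evaluated at $\cos\theta=1$. That single observation makes every term in the displayed chain literally equal to $\frac{1}{(\sin^2 n)\,n^3}$, so the lemma holds as a term-by-term identity and the compounding-error and summation issues you flag simply evaporate. The trade-off is worth stating plainly: your version is rigorous where the paper's is not, but it also exposes the lemma (and with it Theorem~\ref{equivalence theorem} and the hypothesis of Theorem~\ref{conditional theorem}) as contentless, since the ``equivalent forms'' are all the same series and the convergence criterion collapses to $\frac{1}{\sin^2 n}\le n^{2-\epsilon}$, which is essentially the irrationality-measure condition the paper set out to avoid.
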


\begin{proof}
Using the lemma \ref{key lemma} and the decomposition 
$$
\sin (n-a)=(\sin n)\left(\cos a-\frac{(\sin a)\cos n}{\sin n}\right)
$$
we obtain the relation 
\begin{align}
1\sim \frac{\sin (n-a)}{n-a} \sim \frac{(\sin n)(\cos a-\frac{(\sin a)\cos n}{\sin n})}{n-a}\nonumber
\end{align}
in any small neighbourhood of $a$, so that by rearranging, we deduce
\begin{align}
\sin n \sim \frac{n-a}{(\cos a-\frac{(\sin a)\cos n}{\sin n})}\label{major}
\end{align}
in any small neighbourhood of $a$. Plugging \eqref{major} into the finite sum,  we can write
\begin{align}
\sum \limits_{n=1}^{k}\frac{1}{(\sin^2n) n^3}&\sim \sum \limits_{\substack{n=1\\a=n+\delta\\\delta\longrightarrow 0^{+}}}^{k}\frac{(\cos a-\frac{(\sin a)\cos n}{\sin n})^2}{n^2(1-\frac{a}{n})^2n^3}\nonumber \\&=\sum \limits_{\substack{n=1\\\delta\longrightarrow 0^{+}}}^{k}\frac{(\cos (n+\delta)-\frac{(\sin (n+\delta))\cos n}{\sin n})^2}{n^2(\frac{n+\delta}{n}-1)^2n^3}\nonumber \\&=\sum \limits_{\substack{n=1\\\delta \longrightarrow 0^{+}}}^{k}\frac{(G(n))^2\sin^2(\frac{\delta}{n})\cos^{2n-4(i-j)-2}(\frac{\delta}{n})}{(\frac{\delta}{n})^2(\sin^2n)n^5}\nonumber \\&\sim \sum \limits_{n=1}^{k}\frac{(G(n))^2}{(\sin^2n) n^5}\nonumber
\end{align}
using Lemma \ref{key lemma} and \ref{key lemma 2}, where 
\begin{align}
G(n)=\sum \limits_{i=0}^{\frac{n+1}{2}}\sum \limits_{j=0}^{i}(-1)^{i-j}\binom{n}{2i+1} \binom{i}{j}.\nonumber
\end{align}
Repeating the argument on $\sin n$ in the deduced finite sum, we obtain 
\begin{align}
\sum \limits_{n=1}^{k}\frac{(G(n))^2}{(\sin^2n) n^5}\sim \sum \limits_{n=1}^{k}\frac{(G(n))^4}{(\sin^2n) n^7}.\nonumber
\end{align}
Iterating the argument in this manner, we deduce the claimed chain of asymptotic.
\end{proof}
\bigskip

\begin{tikzpicture}[
    >=Latex,
    font=\small,
    every node/.style={align=center},
    pillar/.style={very thick,black!75},
    neigh/.style={draw=blue!55,fill=blue!8,rounded corners=2pt},
    box/.style={draw=black!55,rounded corners=2pt,fill=white,inner sep=4pt}
]

\begin{scope}[xshift=0cm,yshift=0cm]
  \node[font=\bfseries] at (3.4,4.35) {\Large A. Local pillar picture};

  \draw[->] (-0.2,0) -- (7.0,0) node[right] {$x$};
  \draw[->] (0,-0.2) -- (0,3.7) node[above] {$f(x)$};

  \fill[blue!8] (2.35,0) rectangle (4.15,3.25);
  \draw[blue!55,dashed] (2.35,0) -- (2.35,3.25);
  \draw[blue!55,dashed] (4.15,0) -- (4.15,3.25);
  \node[blue!60!black] at (3.25,3.43) {$U_\delta(n)$};

  \draw[thick,domain=0.45:6.55,samples=300,smooth]
    plot (\x,{0.25 + 0.32/(0.12 + abs(sin(1.7*\x r)))/(1+0.18*\x)});
  \node[anchor=west] at (4.95,2.55) {$f(x)=\dfrac{1}{\sin^2 x\,x^3}$};

  \draw[pillar] (3.25,0) -- (3.25,2.85);
  \fill (3.25,2.85) circle (1.2pt);
  \node[above] at (3.25,2.85) {$\bigl(n,f(n)\bigr)$};
  \node[below] at (3.25,0) {$n$};
  \draw[decorate,decoration={brace,amplitude=4pt,mirror}] (3.25,0.08) -- (4.00,0.08);
  \node[below=6pt] at (3.62,0) {$\delta$};

  \fill (3.92,1.55) circle (1.2pt);
  \draw[densely dashed,->,thick] (3.92,1.55) -- (3.92,0);
  \node[below] at (3.92,0) {$a=n+\delta$};
  \node[above right] at (4.02,1.55) {$\text{evaluate in a small neighborhood}$};

  \node[box,anchor=west] at (0.25,0.55)
    {$\displaystyle \frac{\sin(n-a)}{n-a}\sim 1\quad(a\to n)$};
  \node[box,anchor=west] at (0.25,0.03)
    {$\displaystyle \sin(n-a)\sim n-a$};

  \draw[->,thick] (5.8,0.9) -- (6.55,0.9);
  \node[anchor=west,align=left,text width=4.6cm] at (5.95,1.15) {replace the local term by its\\[-1pt] nearby asymptotic model};
\end{scope}

\begin{scope}[xshift=8.2cm,yshift=0cm]
  \node[font=\bfseries] at (4.0,4.35) {\Large B. Iterative transfer of powers};

  \node[box,minimum width=6.8cm,minimum height=0.95cm] (start) at (4.0,3.35)
    {$\displaystyle \sum_{n\ge 1}\frac{1}{\sin^2 n\,n^3}$};
  \node[box,minimum width=6.8cm,minimum height=0.95cm,below=0.9cm of start] (mid)
    {$\displaystyle \sum_{n\ge 1}\frac{G(n)^2}{\sin^2 n\,n^5}$};
  \node[box,minimum width=6.8cm,minimum height=0.95cm,below=0.9cm of mid] (end)
    {$\displaystyle \sum_{n\ge 1}\frac{G(n)^{2s}}{\sin^2 n\,n^{2s+3}}$};

  \draw[->,very thick] (start) -- node[right,align=left] {one local iteration} (mid);
  \draw[->,very thick] (mid) -- node[right,align=left] {repeat} (end);

  \node[box,align=left,anchor=north west,text width=6.9cm] at (0.0,1.35) {
    \textbf{Interpretation.}\\[-1pt]
    Each pillar is a local replacement of the original term by an equivalent nearby expression.\\[-1pt]
    After one pass, the power on the auxiliary factor increases; after $s$ passes, the general form appears.
  };

  \node[box,align=left,anchor=south west,text width=6.9cm] at (0.0,0.15) {
    \(\displaystyle G(n)=\sum_{i=0}^{\frac{n+1}{2}}\sum_{j=0}^{i}(-1)^{i-j}\binom{n}{2i+1}\binom{i}{j}.\)
  };
\end{scope}

\end{tikzpicture}

\begin{corollary}\label{asymptotic}
We have
\begin{align}
G(n)=\sum \limits_{i=0}^{\frac{n+1}{2}}\sum \limits_{j=0}^{i}(-1)^{i-j}\binom{n}{2i+1} \binom{i}{j}\sim n.\nonumber
\end{align}
\end{corollary}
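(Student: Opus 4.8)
\bigskip

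The plan is to identify $G(n)$ as a limiting value of the Vi\`ete sum of Lemma \ref{key lemma 2}. Observe that $G(n)$ is exactly the double sum appearing in Lemma \ref{key lemma 2} with every cosine factor $\cos^{n-2(i-j)-1}(\delta/n)$ replaced by $1$, and that each such factor tends to $1$ as $\delta \to 0^{+}$. I will therefore extract $G(n)$ by letting $\delta \to 0^{+}$ in the identity of Lemma \ref{key lemma 2}, and evaluate the same limit on the other side of that identity directly.

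First I would rewrite Lemma \ref{key lemma 2}, for fixed $n>1$, in the form
\begin{align}
\frac{\sin\delta}{\sin(\delta/n)}=\sum_{i=0}^{\frac{n+1}{2}}\sum_{j=0}^{i}(-1)^{i-j}\binom{n}{2i+1}\binom{i}{j}\cos^{n-2(i-j)-1}(\delta/n),\nonumber
\end{align}
valid for all sufficiently small $\delta>0$. Since the double sum is finite---the number of terms depends only on $n$, not on $\delta$---the limit as $\delta \to 0^{+}$ commutes with the summation, and each factor $\cos^{n-2(i-j)-1}(\delta/n)\to 1$. Hence the right-hand side tends to $G(n)$.

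Next I would compute the limit of the left-hand side. Writing
\begin{align}
\frac{\sin\delta}{\sin(\delta/n)}=n\cdot\frac{\sin\delta}{\delta}\cdot\frac{\delta/n}{\sin(\delta/n)}\nonumber
\end{align}
and applying Lemma \ref{key lemma} (equivalently the limit $\sin m/m\to 1$ as $m\to 0$) to each of the two fractions as $\delta\to 0^{+}$, the left-hand side tends to $n$. Equating the two limits yields $G(n)=n$, which in particular gives the asserted asymptotic $G(n)\sim n$.

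The argument contains no genuine analytic obstacle: the only point demanding care is the interchange of the limit with the double summation, and this is justified simply because the sum is finite with length independent of $\delta$. The real content of the statement is the clean identification of $G(n)$ with $\lim_{\delta\to 0^{+}}\sin\delta/\sin(\delta/n)$, after which the value $n$ drops out immediately from Lemma \ref{key lemma}.
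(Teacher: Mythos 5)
Your proof is correct, but it takes a genuinely different (and better) route than the paper. The paper's entire proof is the single sentence that the asymptotic ``follows from'' the iteration lemma (Lemma \ref{The iteration method}), i.e.\ one is apparently meant to infer $G(n)\sim n$ from the asymptotic equivalence of the partial sums $\sum_{n\leq k}\frac{1}{(\sin^2 n)n^3}\sim\sum_{n\leq k}\frac{(G(n))^2}{(\sin^2 n)n^5}$; that inference is not valid in general (asymptotic equality of partial sums does not give termwise asymptotics), so the paper's argument is at best a heuristic. You instead go straight to Lemma \ref{key lemma 2}: divide the Vi\`ete identity by $\sin(\delta/n)$, let $\delta\to 0^{+}$, note that the finite sum (length depending only on $n$) lets the limit pass inside so the right side tends to $G(n)$, and compute the left side as $n$ via $\sin m/m\to 1$. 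This is rigorous, self-contained, and actually proves the stronger statement $G(n)=n$ exactly (consistent with $\sin(n\theta)=\sin\theta\,U_{n-1}(\cos\theta)$ and $U_{n-1}(1)=n$), of which the claimed asymptotic is an immediate consequence. The only caveat you inherit is the correctness of Lemma \ref{key lemma 2} as stated (its summation limits are written loosely), but that is the paper's hypothesis, not a gap in your argument.
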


\begin{proof}
The asymptotic follows from Theorem \ref{The iteration method}.
\end{proof}
\bigskip

Now, we show that we can study the convergence of the flint hill series by examining its equivalent forms in the following result.

\begin{theorem}\label{equivalence theorem}
The flint hills series 
\begin{align}
\sum \limits_{n=1}^{\infty}\frac{1}{(\sin^2n) n^3}\nonumber
\end{align}
is convergent (resp. divergent) if and only if 
\begin{align}
\sum \limits_{n=1}^{\infty}\frac{(G(n))^{2s}}{(\sin^2n) n^{2s+3}}\nonumber
\end{align}
is convergent (resp. divergent) for some $s\geq 1$ with $s\in \mathbb{N}$, where 
\begin{align}
G(n)=\sum \limits_{i=0}^{\frac{n+1}{2}}\sum \limits_{j=0}^{i}(-1)^{i-j}\binom{n}{2i+1} \binom{i}{j}.\nonumber
\end{align}
\end{theorem}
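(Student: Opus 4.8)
The plan is to derive the theorem as a direct consequence of the chain of asymptotics in Lemma \ref{The iteration method}, by invoking the elementary fact that two series of positive terms whose partial sums are asymptotically equal converge or diverge together. To set this up I would fix an integer $s \geq 1$ and write
\begin{align}
A_k := \sum_{n=1}^{k} \frac{1}{(\sin^2 n) n^3}, \qquad B_k := \sum_{n=1}^{k} \frac{(G(n))^{2s}}{(\sin^2 n) n^{2s+3}}. \nonumber
\end{align}
Since $\sin^2 n > 0$ for every positive integer $n$, both summands are strictly positive, so the sequences $(A_k)_{k \geq 1}$ and $(B_k)_{k \geq 1}$ are strictly increasing and each either converges to a finite limit or tends to $+\infty$. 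By Lemma \ref{The iteration method} these partial sums satisfy $A_k \sim B_k$, which I interpret as $\lim_{k \to \infty} A_k / B_k = 1$.

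With this in hand the two directions follow by the same mechanism. If the Flint Hill series converges, then $A_k \to A$ for some finite $A$; writing $B_k = A_k \cdot (B_k / A_k)$ and letting $k \to \infty$ gives $B_k \to A$, so the equivalent series converges. Conversely, if $B_k \to B < \infty$, then $A_k = B_k \cdot (A_k / B_k) \to B$, so the Flint Hill series converges. For divergence, monotonicity shows that the Flint Hill series diverges exactly when $A_k \to +\infty$; the relation $B_k / A_k \to 1$ then forces $B_k \to +\infty$, and the reverse implication is identical with the roles of $A_k$ and $B_k$ exchanged. This proves the stated equivalence for each fixed $s$, hence a fortiori the displayed equivalence with the quantifier ``for some $s \geq 1$.''

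The delicate point is not this final packaging but the justification of the input $A_k \sim B_k$ itself. In Lemma \ref{The iteration method} the relation $\sin n \sim (n-a)/(\cos a - \frac{(\sin a)\cos n}{\sin n})$ is produced only in a shrinking neighbourhood $a = n + \delta$, $\delta \to 0^{+}$, of each integer, and is then substituted term by term. The main obstacle is therefore to upgrade this family of per-term neighbourhood limits into a genuine ratio asymptotic of the full partial sums as $k \to \infty$: one must show that the term-by-term factor relating the $n$-th summand of $A_k$ to that of $B_k$ tends to $1$ uniformly enough that it propagates to the ratio $A_k / B_k$, rather than accumulating error as $k$ grows. I would make this precise by first establishing that each summand of $B_k$ equals the corresponding summand of $A_k$ up to a multiplicative factor $1 + o(1)$, and then verifying that such a termwise relation between series of positive terms indeed yields $A_k / B_k \to 1$, which is exactly the quantitative content needed to license the convergence dichotomy above.
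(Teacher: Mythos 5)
Your proposal takes essentially the same route as the paper: both deduce the theorem directly from the chain of asymptotics in Lemma \ref{The iteration method}, and your explicit argument that two monotone, positive partial-sum sequences with ratio tending to $1$ must converge or diverge together is a correct (and more careful) rendering of the paper's one-line deduction. The delicate point you flag --- upgrading the per-term, shrinking-neighbourhood limits $\delta \to 0^{+}$ into a genuine asymptotic of the partial sums as $k \to \infty$ --- is indeed the real gap, but it lives in the proof of the Lemma itself, which both you and the paper invoke as a black box, so your treatment of this theorem matches the paper's.
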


\begin{proof}
Using Lemma \ref{The iteration method}, we can write the asymptotic
\begin{align}
\sum \limits_{n=1}^{k}\frac{1}{(\sin^2n) n^3} \sim \sum \limits_{n=1}^{k}\frac{(G(n))^2}{(\sin^2n) n^5}\cdots \sim \sum \limits_{n=1}^{k}\frac{(G(n))^{2s}}{(\sin^2n) n^{2s+3}}\nonumber
\end{align}
for all $s\geq 1$ with $s\in \mathbb{N}$ where 
\begin{align}
G(n)=\sum \limits_{i=0}^{\frac{n+1}{2}}\sum \limits_{j=0}^{i}(-1)^{i-j}\binom{n}{2i+1} \binom{i}{j}\nonumber
\end{align}
so that 
\begin{align}
\sum \limits_{n=1}^{\infty}\frac{1}{(\sin^2n) n^3}\nonumber
\end{align}
is convergent (resp. divergent) if and only 
\begin{align}
\sum \limits_{n=1}^{\infty}\frac{(G(n))^{2s}}{(\sin^2n) n^{2s+3}}\nonumber
\end{align}
is convergent (resp. divergent) for some $s\geq 1$ with $s\in \mathbb{N}$.
\end{proof}
\bigskip

\section{The convergence or divergence criterion}

Here, we introduce a criterion for determining the convergence of the flint hill series. The following could be considered as a test tool for deciding on the convergence or divergence of the flint hill series, avoiding studies of irrationality measure of $\pi$ which is generally a harder problem.

\begin{theorem}\label{conditional theorem}
If for any small $\epsilon>0$
\begin{align}
\bigg|\sum \limits_{i=0}^{\frac{n+1}{2}}\sum \limits_{j=0}^{i}(-1)^{i-j}\binom{n}{2i+1} \binom{i}{j}\bigg|^{2s} \leq |(\sin^2n)|n^{2s+2-\epsilon}\nonumber
\end{align}
for some $s\in \mathbb{N}$, then the flint hill series
\begin{align}
\sum \limits_{n=1}^{\infty}\frac{1}{(\sin^2n) n^3}\nonumber
\end{align}
converges.
\end{theorem}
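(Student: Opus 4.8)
The plan is to reduce the statement to Theorem \ref{equivalence theorem} and then dominate the resulting equivalent series by a convergent $p$-series. By Theorem \ref{equivalence theorem} the flint hill series converges if and only if
\begin{align}
\sum \limits_{n=1}^{\infty}\frac{(G(n))^{2s}}{(\sin^2n) n^{2s+3}}\nonumber
\end{align}
converges for \emph{some} $s\geq 1$, so it suffices to produce one such $s$ for which this series converges. I would take $s$ to be precisely the integer furnished by the hypothesis, and work with the associated $\epsilon>0$.

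First I would record the harmless sign observations that make the hypothesis directly applicable to the equivalent series: since $\sin^2 n\geq 0$ we have $|\sin^2 n|=\sin^2 n$, and since the exponent $2s$ is even we have $(G(n))^{2s}=|G(n)|^{2s}\geq 0$, so every term of the equivalent series is nonnegative. Dividing the assumed inequality $|G(n)|^{2s}\leq |\sin^2 n|\,n^{2s+2-\epsilon}$ through by $(\sin^2 n)n^{2s+3}$ then yields the termwise estimate
\begin{align}
\frac{(G(n))^{2s}}{(\sin^2n) n^{2s+3}} \leq \frac{(\sin^2 n) n^{2s+2-\epsilon}}{(\sin^2 n) n^{2s+3}} = \frac{1}{n^{1+\epsilon}}\nonumber
\end{align}
valid for every positive integer $n$ (noting $\sin n\neq 0$ for such $n$, so no division by zero occurs). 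Since $\epsilon>0$, the series $\sum_{n=1}^{\infty} n^{-(1+\epsilon)}$ is a convergent $p$-series, and it dominates the equivalent series term by term; by the comparison test the equivalent series converges for this $s$, and Theorem \ref{equivalence theorem} then delivers convergence of the flint hill series.

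The deduction above is, honestly, a one-line comparison once Theorem \ref{equivalence theorem} is in hand, so the real content of the result is not in the proof but in the hypothesis it isolates. In view of Corollary \ref{asymptotic}, which records $G(n)\sim n$, the assumed inequality $|G(n)|^{2s}\leq |\sin^2 n|\,n^{2s+2-\epsilon}$ is essentially a demand for a lower bound on $|\sin n|$ by a fixed negative power of $n$, i.e. a statement about how closely integers can approach integer multiples of $\pi$. Thus the genuine obstacle — the Diophantine control of $|\sin n|$ that is equivalent to bounding the irrationality measure of $\pi$ — is precisely what the hypothesis packages into a clean sufficient condition; the theorem converts that difficulty into a convergence criterion rather than resolving it, and I would present it in exactly that spirit.
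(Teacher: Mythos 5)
Your argument is exactly the paper's proof: divide the hypothesis by $(\sin^2 n)\,n^{2s+3}$ to dominate the equivalent series termwise by the convergent $p$-series $\sum n^{-(1+\epsilon)}$, then invoke Theorem \ref{equivalence theorem}. The sign remarks and the closing discussion of what the hypothesis really encodes are sensible additions, but the mathematical content coincides with the paper's own one-line comparison.
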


\begin{proof}
For any small $\epsilon>0$, we can write 
\begin{align}
\sum \limits_{n=1}^{\infty}\frac{(G(n))^{2s}}{(\sin^2n) n^{2s+3}}&\leq \sum \limits_{n=1}^{\infty}\frac{1}{n^{1+\epsilon}}<\infty \nonumber
\end{align}
under the condition 
\begin{align}
\bigg|\sum \limits_{i=0}^{\frac{n+1}{2}}\sum \limits_{j=0}^{i}(-1)^{i-j}\binom{n}{2i+1} \binom{i}{j}\bigg|^{2s} \leq |(\sin^2n)|n^{2s+2-\epsilon}.\nonumber
\end{align}
By Theorem \ref{equivalence theorem}, the flint hill series
\begin{align}
\sum \limits_{n=1}^{\infty}\frac{1}{(\sin^2n) n^3}<\infty \nonumber
\end{align}
and so it converges.
\end{proof}
\bigskip

\rule{100pt}{1pt}

\bibliographystyle{amsplain}

\begin{thebibliography}{10}

\bibitem {pickover2002mathematics} C.A.~Pickover, \textit{The mathematics of Oz: mental gymnastics from beyond the edge}, Cambridge University Press, 2002.\\

\bibitem {alekseyev2011convergence} M.A.~Alekseyev, \textit{On convergence of the Flint Hills series}, arXiv preprint arXiv:1104.5100, 2011.\\

\end{thebibliography}

\end{document}